\renewcommand{\`}{\mskip-\tinymuskip}
\newcommand{\citelink}[2]{\hyperlink{cite.\therefsection @#1}{#2}}
\renewcommand{\mathbb}{\mathds}
\newcommand{\R}{\mathbb R}
\newcommand{\vect}[1]{\bm{#1}}
\newcommand{\dd}{\mathop{}\!\mathrm{d}}
\DeclareMathOperator{\Law}{Law}
\DeclareMathOperator{\Proba}{\mathbb{P}}
\DeclareMathOperator{\Expect}{\mathbb{E}}
\newcommand{\1}{\mathbb{1}}
\newcommand{\adjustbin}{\negmedspace{}}
\newcommand{\HS}{\textnormal{HS}}
\newtheorem{thm}{Theorem}
\newtheorem{lem}[thm]{Lemma}
\newtheorem{cor}[thm]{Corollary}
\newtheorem{prop}[thm]{Proposition}
\theoremstyle{definition}
\newtheorem*{assu*}{Assumption}
\theoremstyle{remark}
\newtheorem{rem}{Remark}
\title{Uniform log-Sobolev inequalities for mean field particles
with flat-convex energy}
\author{Songbo Wang}
\affil{CMAP, École polytechnique, IP Paris, Palaiseau, France}
\begin{document}

\subjclass{ Primary 26D10; Secondary 39B62, 60E15}
\keywords{log-Sobolev inequality, Poincaré inequality, diffusion process,
concentration of measure}

\maketitle

\begin{abstract}
The purpose of this short note is to demonstrate
uniform logarithmic Sobolev inequalities
for the mean field gradient particle systems
associated to an energy functional that is convex in the flat sense.
A defective log-Sobolev inequality was already established implicitly
in a previous joint work with F.~Chen and Z.~Ren
[\citelink{ulpoc}{\texttt{arXiv:2212.03050 [math.PR]}}].
It remains only to tighten it by a uniform Poincaré inequality,
which we prove by the method
in a recent work of Guillin, W.~Liu, L.~Wu and C.~Zhang
[\citelink{GLWZUPLSI}{\textit{Ann.\ Appl.\ Probab.}, 32(3):1590–1614, 2022}].
As an application, we show that the particle system
exhibits the concentration of measure phenomenon in the long time.
\end{abstract}

\section{Introduction and main result}

Let $d$ be an integer $\geqslant 1$.
In this note, we consider convex \emph{mean field energy functionals}
$F$ that are defined on the space of probability measures on $\R^d$.
More precisely, such functionals are mappings
\[
F \colon \mathcal P_2(\R^d) \to \R,
\]
where $\mathcal P_2(\R^d)$ denotes the set of probability measures
of finite second moment;
and along flat interpolations in $\mathcal P_2(\R^d)$,
the energy goes through a convex trajectory.
Let $N$ be an integer $\geqslant 1$
and let $\vect x = (x^1, \ldots, x^N)$ be an $N$-tuple of coordinates in $\R^d$,
that is to say, $\vect x \in \R^{Nd}$.
We denote by $\mu_{\vect x}$ the empirical measure formed with the
$N$ coordinates $x^1$, \dots, $x^N$, that is,
\[
\mu_{\vect x} \coloneqq \frac 1N \sum_{i = 1}^N \delta_{x^i}.
\]
We are interested in proving that the \emph{$N$-particle Gibbs measure} $m^N_*$
for the energy $F$, defined by
\begin{equation}
\label{eq:def-ps-gibbs}
m^N_*(\dd\vect x) \coloneqq
\frac{\exp \bigl( - NF(\mu_{\vect x}) \bigr) \dd\vect x}
{\int_{\R^{Nd}}\exp \bigl( - NF(\mu_{\vect x'}) \bigr) \dd\vect x'},
\end{equation}
satisfies a \emph{logarithmic Sobolev inequality} uniformly in $N$:
in other words, for some sequence $(\rho^N)_{N \in \mathbb N}$ with
$\liminf_{N \to \infty} \rho^N > 0$, we have
\[
2 \rho \int_{\R^{Nd}} \log \frac{\dd m^N}{\dd m^N_*} \dd m^N
\eqqcolon 2 \tilde\rho H(m^N | m^N_*)
\leqslant I(m^N | m^N_*)
\coloneqq
\int_{\R^{Nd}} \biggl| \nabla \log \frac{\dd m^N}{\dd m^N_*} \biggr|^2
\dd m^N
\]
for all probability measures $m^N$ on $\R^{Nd}$
such that the relative density $\dd m^N\!/\! \dd m^N_*$
is $\mathcal C^1_\textnormal{b}$, that is,
bounded and $\mathcal C^1$ with bounded derivatives.
In the inequality above, we call the quantities $H$, $I$
the relative \emph{entropy} and \emph{Fisher information} respectively.
We also call $\rho^N$ a log-Sobolev constant for $m^N_*$
if the assertion above holds.

The main motivation behind our search for the log-Sobolev inequality
for $m^N_*$ is that it allows us to show
the \emph{exponential convergence in entropy}
for the system of diffusive particles:
\begin{equation}
\label{eq:ps}
\dd X^i_t = - D_m F(\mu_{\vect X_t}, X^i_t) \dd t + \sqrt 2 \dd B^i_t,\qquad
\text{for $i=1$, \dots, $N$.}
\end{equation}
Here $D_m F$ is called intrinsic derivative of $F$ and is a mapping
from $\mathcal P_2(\R^d) \times \R^d$ into $\R^d$
and will be defined in the following,
and $B^i_t$ are i.i.d.\ standard Brownian motions in $d$ dimensions.
The system of particles corresponds to a Liouville or Fokker--Planck equation
defined on $[0,\infty) \times \R^{Nd}$ for the flow of probability measures
\[
m^N_t = \Law(X^1_t, \ldots, X^N_t),\qquad\text{for $t \in [0,\infty)$}
\]
and by explicit computations, we can find that the $m^N_*$ is invariant
to the dynamics.
Moreover, if $m^N_*$ verifies a $\rho$-log-Sobolev inequality,
then for all initial value $m^N_0$ of the flow,
\[
H(m^N_t | m^N_*) \leqslant e^{-2\rho t} H(m^N_0 | m^N_*).
\]
See Section~5.2 of the Bakry--Gentil--Ledoux monograph \cite{BGLMarkov}
for details.
Thus the existence of a log-Sobolev constant independent from $N$
implies that the rate of convergence is independent of the number of particles.
We refer readers to the previous work joint with F.~Chen and Z.~Ren
\cite{ulpoc} for motivations behind the particle system,
which include in particular the modeling of shallow neural networks.

Apart from the log-Sobolev inequality,
the Poincaré inequality also plays a central role in the ergodic behavior
of diffusion processes.
For our probability measure of interest $m^N_*$,
we say that it satisfies a $\rho^N$-Poincaré inequality if
for all $f \in \mathcal C^1_\textnormal{b} (\R^{Nd}; \R)$,
\begin{equation}
\label{eq:ps-gibbs-pi}
\rho^N \biggl( \int_{\R^{Nd}} f^2 \dd m^N_*
- \Bigl( \int_{\R^{Nd}} f \dd m^N_* \Bigr)^{\!2} \biggr)
\leqslant \int_{\R^{Nd}} \lvert \nabla f \rvert^2 \dd m^N_*.
\end{equation}
The Poincaré implies equally exponential convergence for the diffusion process,
except that the relative entropy $H(m^N_t|m^N_*)$ must be replaced
by the $\chi^2$ divergence between $m^N_t$ and $m^N_*$,
which is a weighted $L^2$ distance.
We refer readers to Section~4.2 of \cite{BGLMarkov} for details.
Here we only mention that in this note,
we first obtain a uniform Poincaré inequality for $m^N_*$
by the method of Guillin, W.~Liu, L.~Wu and C.~Zhang \cite{GLWZUPLSI}.
Then we derive the stronger log-Sobolev inequality by
a ``tightening'' procedure.

\bigskip

We impose the following assumption on the energy $F$.

\begin{assu*}
The functional $F$ admits first and second-order flat derivatives
\[
\frac{\delta F}{\delta m} \colon \mathcal P_2(\R^d) \times \R^d \to \R,
\quad
\frac{\delta^2\`F}{\delta m^2} \colon
\mathcal P_2(\R^d) \times \R^d \times \R^d \to \R
\]
that are joint continuous and are $\mathcal C^2$ in the spatial variables.
(See \cite[Chapter 5]{CarmonaDelarueMFG1} for related definitions.)
Moreover, denoting
\[
D_m F(m,x) \coloneqq \nabla_x \frac{\delta F}{\delta m}(m,x),
\quad
D_m^2 F(m,x,x') \coloneqq
\nabla^2_{x,x'} \frac{\delta^2\`F}{\delta m^2}(m,x,x'),
\]
we have the following:
\begin{enumerate}
\item there exists an $M^F_{mm} \geqslant 0$
such that for all $m \in \mathcal P_2(\R^d)$ and $x$, $x' \in \R^d$,
the Euclidean operator norm of the matrix
$D_m^2F(m,x,x')$ does not exceed $M^F_{mm}$;
\item there exists an $M^F_{mx} \geqslant 0$
such that for all $m \in \mathcal P_2(\R^d)$ and $x \in \R^d$,
the Euclidean operator norm of the matrix
$\nabla_x D_m F(m,x)$ does not exceed $M^F_{mx}$;
\item there exists a $\rho > 0$ such that the probability measures $\hat m$
defined by
\[
\hat m(\dd x) \coloneqq
\frac{\exp \bigl( - \frac{\delta F}{\delta m}(m,x) \bigr) \dd x}
{\int_{\R^d}\exp \bigl( - \frac{\delta F}{\delta m}(m,x') \bigr) \dd x'}
\]
satisfy a $\rho$-log-Sobolev inequality uniformly
for $m \in \mathcal P_2(\R^d)$;
\item for the same $\rho$, the conditional distribution $m_*^{N,1|-1}$
with density
\[
m_*^{N,1|-1}(x^1 | \vect x^{-1})
\coloneqq \frac{\exp \bigl( - NF(\mu_{\vect x}) \bigr)}
{\int_{\R^d} \exp \bigl( - NF(\mu_{\vect x}) \bigr) \dd x^1}
\]
satisfies a $\rho$-Poincaré inequality
uniformly for $\vect x^{-1} \coloneqq (x^2,\ldots,x^N) \in \R^{(N-1)d}$;
\item the energy is convex in the \emph{flat interpolation} sense:
for all $m$, $m' \in \mathcal P_2(\R^d)$ and $\lambda \in [0,1]$,
\[
F\bigl( (1 - \lambda) m + \lambda m' \bigr)
\leqslant (1 - \lambda) F(m) + \lambda F(m').
\]
\end{enumerate}
\end{assu*}

The main result of this note is formulated as follows.

\begin{thm}
\label{thm:lsi}
If the energy functional $F$ satisfies the assumption above,
then for $N > M^F_{mm} / \rho \eqqcolon \alpha$,
its $N$-particle Gibbs measure $m^N_*$, defined by \eqref{eq:def-ps-gibbs},
satisfies a log-Sobolev inequality
with the constant
\[
\rho^N \coloneqq
\frac{1 - \varepsilon - \bigl(8\alpha+6(\varepsilon^{-1}-1)\bigr)
\frac{\alpha^2}{N}}
{1 + 2d \bigl(5 + 3(\varepsilon^{-1} - 1) \alpha\bigr)
\frac{\alpha}{1 - \alpha/N}} \rho,
\]
where $\varepsilon \in (0,1)$ is arbitrary.
\end{thm}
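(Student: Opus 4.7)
The overall strategy is dictated by the abstract: combine a uniform Poincaré inequality, obtained by the Guillin--Liu--Wu--Zhang method from assumptions (ii) and (iv), with the defective log-Sobolev inequality that is implicit in the prior joint work, and then tighten the latter using the former via a Rothaus-type argument. Let me sketch the three steps in order.

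\textbf{Step 1: Uniform Poincaré inequality.} The first task is to prove that $m^N_*$ satisfies a Poincaré inequality of the form \eqref{eq:ps-gibbs-pi} with a constant of order $\rho / \bigl(1 + C d \alpha / (1 - \alpha/N)\bigr)$. The starting point is assumption (iv): for every frozen configuration $\vect x^{-1}$, the conditional law $m^{N,1|-1}_*$ satisfies a $\rho$-Poincaré inequality. A tensorization argument then gives a $\rho$-Poincaré for the product of conditionals, while the discrepancy with $m^N_*$ is governed by the off-diagonal interaction $\nabla_x D_m F$, whose operator norm is bounded by $M^F_{mx}$ (assumption (ii)). Following Guillin--Liu--Wu--Zhang, I would write a covariance/commutator estimate between single-particle gradients and extract the $\chi^2$-type correction, which introduces the dimension $d$ and the factor $\alpha/(1-\alpha/N)$ that appears in the denominator of $\rho^N$.

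\textbf{Step 2: Defective log-Sobolev inequality.} Next, I would recall from \cite{ulpoc} the defective log-Sobolev inequality
\[
2(1-\varepsilon)\rho \, H(m^N | m^N_*) \leqslant I(m^N | m^N_*) + \mathcal R_\varepsilon^N(m^N),
\]
whose derivation relies on assumption (iii) (uniform log-Sobolev for $\hat m$), assumption (i) (bound $M^F_{mm}$ on $D_m^2 F$), and the flat convexity (v). Tracking the constants, the residual $\mathcal R_\varepsilon^N$ is a variance-type functional of $\log(\dd m^N/\dd m^N_*)$ along mean-field fluctuations, with a prefactor of order $\bigl(8\alpha + 6(\varepsilon^{-1}-1)\bigr)\alpha^2 / N$, matching the numerator of $\rho^N$.

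\textbf{Step 3: Tightening.} The final step is to absorb $\mathcal R_\varepsilon^N$ into the Fisher information using the Poincaré inequality from Step~1. Standard Rothaus-type manipulations bound a variance term by the Poincaré constant times a Dirichlet form, yielding an inequality of the shape
\[
2\rho^N H(m^N | m^N_*) \leqslant I(m^N | m^N_*),
\]
with $\rho^N$ exactly the ratio in the theorem statement, provided one matches the numerator (defect size) with the denominator (Poincaré cost). This gives the tight log-Sobolev inequality on $\mathcal C^1_\textnormal{b}$ densities.

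\textbf{Main obstacle.} The conceptual architecture is clear, and Step~3 is essentially mechanical once the constants are in hand. The substantive work lies in Step~1: faithfully importing the Guillin--Liu--Wu--Zhang argument to the mean-field setting with the sharp scaling in $N$, $d$, and $\alpha$, and in parallel keeping precise track of the defect prefactor produced in Step~2. The numerator and denominator of $\rho^N$ are both affine in $\varepsilon^{-1}-1$ with coefficients tied to $\alpha^2$ and $d\alpha$ respectively, so any slack in either bound would propagate into the final constant. Ensuring the two sides match and extracting a positive $\rho^N$ for $N > \alpha$ is the delicate bookkeeping that drives the proof.
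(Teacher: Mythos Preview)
Your three-step architecture (defective LSI, Poincaré, tighten) matches the paper, but the constants and ingredients are misallocated in ways that would prevent you from reaching the stated $\rho^N$.

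\textbf{Poincaré step.} You base Step~1 on assumptions (ii) and (iv), with the $d$-dependence and the factor $\alpha/(1-\alpha/N)$ entering here. In the paper, the Poincaré inequality is obtained via the second-order (Bakry--Émery $\Gamma_{\`2}$) form \eqref{eq:ps-gibbs-second-pi}: the Hessian of $U^N$ splits into a cross term $N^{-1}\sum_{i,j}\nabla_i f\,D_m^2F(\mu_{\vect x},x^i,x^j)\,\nabla_j f$ and a diagonal term. The cross term is shown to be \emph{nonnegative} precisely by flat convexity (v), through the positive-type argument of Lemma~\ref{lem:convexity}; the diagonal is then handled by the conditional Poincaré (iv), with the correction coming from the bound (i) on $D_m^2F$, not (ii). The resulting Poincaré constant is simply $\rho - M^F_{mm}/N = \rho(1-\alpha/N)$, with no dimension $d$ anywhere. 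Without invoking convexity to kill the off-diagonal block, the GLWZ-style argument does not close.

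\textbf{Location of $d$ and tightening.} The $d$-dependence lives in the defect $\delta$ of the defective LSI from \cite{ulpoc}, not in the Poincaré constant; the numerator of $\rho^N$ is $\rho'/\rho$ from that defective inequality, and the denominator is $1+\delta/\bigl(4(\rho-M^F_{mm}/N)\bigr)$. Crucially, the paper observes that the standard Rothaus tightening (Proposition~5.1.3 in \cite{BGLMarkov}) yields
\[
\Bigl(\tfrac{1}{\rho'} + \tfrac{1}{\rho-M^F_{mm}/N}\bigl(\tfrac{\delta}{4\rho'}+1\bigr)\Bigr)^{-1},
\]
whose extra ``$+1$'' prevents recovery of the stated $\rho^N$ (and of $\rho$ as $\delta\to 0$). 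The paper therefore proves a new tightening lemma via hypercontractivity (Proposition~\ref{prop:tightening}): chain the hyperboundedness coming from the defective LSI with the $L^2$ contraction from Poincaré, then run Gross's argument backwards to get the cleaner constant $\rho_1\rho_2/(\rho_2+\delta/4)$. Your ``standard Rothaus-type manipulations'' would not produce the constant in the theorem.
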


\begin{rem}
Note that in the expression for $\rho^N$ above,
fixing the value of $\varepsilon$, we have
\[
\lim_{N \to \infty} \rho^N
= \frac{1 - \varepsilon} {1 + 2d\alpha
\bigl(5 + 3(\varepsilon^{-1} - 1) \alpha\bigr)}
\rho > 0.
\]
So the result gives indeed a log-Sobolev inequality uniform in $N$.
However, unless $\alpha = 0$, we cannot find $\varepsilon$
such that
\[
\lim_{N \to \infty} \rho^N = \rho,
\]
as conjectured by Delgadino, Gvalani, Pavliotis and Smith
in \cite{DGPSPhase}.
Moreover, the constant $\rho^N$ obtained by our method
becomes weaker when the dimension $d$ increases, making it possibly
unsuitable for applications in high dimensions.
The author does not know if these behaviors can be avoided.
\end{rem}

Then we show that the log-Sobolev inequality for $m^N_*$
implies the concentration of measure phenomenon
for the particle system \eqref{eq:ps} in the long time.
To achieve this, we first demonstrate a general result
for symmetric diffusions on $\R^d$.

\begin{thm}
\label{thm:concentration}
Let $m_*$ be a probability measure on $\R^d$
which admits the density $m_*(x) = \exp\bigl( - U(x) \bigr)$
for some $U \in \mathcal C^2(\R^d; \R)$.
Suppose that the Euclidean operator norm of the Hessian $\nabla^2 U$
is bounded by some $M > 0$
and $m_*$ satisfies a $\rho$-log-Sobolev inequality for some $\rho > 0$.
Let $(X_t)_{t \geqslant 0}$ be the overdamped Langevin particle:
\[
\dd X_t = - \nabla U(X_t) \dd t + \sqrt 2 \dd B_t,
\]
where $B_t$ is a standard Brownian motion in $\R^d$.
Then, for all $t \geqslant 1$ and all $1$-Lipschitz $f \colon \R^d \to \R$,
\begin{multline*}
\Proba\bigl[f(X_t) - m_*[f] \geqslant r\bigr] \\
\leqslant \int_{\R^d}
\exp \biggl( \frac{M^2+3}{6} e^{-\rho(t-1)} W_2^2(\delta_x, m_*)\biggr)
m_0(\dd x) \exp \biggl( - \frac{\rho r^2}{4} \biggr),
\end{multline*}
where $m_*[f]$ denotes the integral $\int_{\R^d} f\dd m_*$.
\end{thm}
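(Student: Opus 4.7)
My plan is to reduce to the case of a Dirac initial condition and establish a Chernoff bound. By conditioning on $X_0 = x$, it is enough to prove that for every $x \in \R^d$,
\[
\Proba\bigl[f(X_t) - m_*[f] \geq r \mid X_0 = x\bigr]
\leq \exp\bigl(\tfrac{M^2+3}{6} e^{-\rho(t-1)} W_2^2(\delta_x, m_*)\bigr)
\exp\bigl(-\tfrac{\rho r^2}{4}\bigr).
\]
Markov's inequality bounds the left-hand side by $e^{-\lambda r - \lambda m_*[f]} (P_t e^{\lambda f})(x)$ for any $\lambda > 0$, where $(P_t)_{t \geq 0}$ denotes the Markov semigroup of the SDE. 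The problem thus reduces to controlling the moment generating function $(P_t e^{\lambda f})(x)$.

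I split the semigroup as $P_t = P_1 P_{t-1}$ and write $(P_1 h)(x) = \int h \cdot q_1^x \dd m_*$, where $q_1^x \coloneqq \dd P_1^* \delta_x / \dd m_*$. Applying Hölder's inequality with the conjugate exponents $p \coloneqq 1 + e^{2\rho(t-1)}$ and $p' \coloneqq 1 + e^{-2\rho(t-1)}$ gives
\[
(P_t e^{\lambda f})(x) \leq \bigl\|P_{t-1} e^{\lambda f}\bigr\|_{L^p(m_*)} \cdot \bigl\|q_1^x\bigr\|_{L^{p'}(m_*)}.
\]
The first factor is controlled by Nelson's hypercontractivity, which is equivalent to the $\rho$-log-Sobolev inequality of $m_*$: we have $\|P_{t-1} e^{\lambda f}\|_{L^p(m_*)} \leq \|e^{\lambda f}\|_{L^2(m_*)}$, and Herbst's argument bounds the latter by $\exp(\lambda m_*[f] + \lambda^2/\rho)$ since $f$ is 1-Lipschitz.

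For the second factor, set $u \coloneqq e^{-2\rho(t-1)} \in (0,1]$. By reversibility, $m_*[(q_1^x)^{1+u}] = \Expect_x[q_1^x(X_1)^u]$, and Jensen's inequality applied to the concave map $y \mapsto y^u$ together with the identity $\Expect_x[q_1^x(X_1)] = m_*[(q_1^x)^2]$ gives $m_*[(q_1^x)^{1+u}] \leq (m_*[(q_1^x)^2])^u$. The second moment $m_*[(q_1^x)^2]$ is then estimated via Wang's $L^2$-Harnack inequality, valid under the curvature–dimension condition CD$(-M,\infty)$ implied by $\nabla^2 U \geq -M$: for positive $h$ and arbitrary $z$,
\[
(P_1 h(x))^2 \leq P_1(h^2)(z) \exp\bigl(M|x-z|^2/(e^{2M}-1)\bigr).
\]
Substituting $h = q_1^x$, using $P_1 q_1^x(x) = m_*[(q_1^x)^2]$, rearranging, integrating $z$ against $m_*$, and invoking Jensen's inequality on the convex map $v \mapsto e^{-v}$ yields
\[
m_*[(q_1^x)^2] \leq \exp\bigl(M W_2^2(\delta_x, m_*)/(e^{2M}-1)\bigr).
\]

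Assembling these ingredients and optimising $\lambda = \rho r / 2$, the Chernoff quadratic $-\lambda r + \lambda^2/\rho$ produces the factor $e^{-\rho r^2/4}$, leaving a prefactor $\exp\bigl(\frac{u}{1+u} \cdot \frac{M}{e^{2M}-1} W_2^2(\delta_x, m_*)\bigr)$. Two elementary inequalities finish the job: $\frac{u}{1+u} \leq \frac{1}{2} e^{-\rho(t-1)}$ (a reformulation of $(e^{\rho(t-1)}-1)^2 \geq 0$) and $M/(e^{2M}-1) \leq (M^2+3)/3$ (straightforward from $M/(e^{2M}-1) \leq 1/2$); together they bound the prefactor by $\exp\bigl(\frac{M^2+3}{6} e^{-\rho(t-1)} W_2^2(\delta_x, m_*)\bigr)$, and integrating over $m_0(\dd x)$ gives the stated bound. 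The delicate point, as I see it, is less any single estimate than the orchestration: the exponents in the Hölder step are forced by hypercontractivity, which in turn determines the decay rate $e^{-\rho(t-1)}$, while the Wang Harnack inequality at time $1$ is what produces the exponential dependence on $W_2^2(\delta_x, m_*)$ with a constant compatible with the statement.
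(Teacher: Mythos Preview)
Your argument is correct and reaches the stated bound, but the route is genuinely different from the paper's. The paper follows Jackson--Zitridis: it expresses $\log m_t[e^{\lambda f}]$ via the Donsker--Varadhan variational formula on path space, rewrites it as a stochastic control problem, and then invokes a ``controlled entropy decay'' estimate (Proposition~\ref{prop:entropy-decay}). That proposition is itself proved by a coupling-by-change-of-measure argument (Girsanov) on $[0,1]$ yielding Wang's \emph{log}-Harnack inequality, followed by entropy dissipation and Gr\"onwall on $[1,t]$; the endgame uses the T\textsubscript{1} transport inequality. You instead work purely at the semigroup level: you factor $P_t=P_1P_{t-1}$, apply H\"older with the exponents dictated by Gross hypercontractivity to separate a $P_{t-1}$-piece (handled by hypercontractivity and Herbst) from a $P_1$-piece (handled by Wang's \emph{power}-Harnack inequality combined with the Jensen-and-integrate trick to bound $m_*[(q_1^x)^2]$). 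Both proofs share the same two-scale philosophy---short-time Harnack regularisation plus long-time LSI contraction---but your version avoids the stochastic control machinery and Girsanov entirely, and in fact produces the sharper prefactor $\tfrac{u}{1+u}\cdot\tfrac{M}{e^{2M}-1}\leqslant\tfrac14 e^{-\rho(t-1)}$ before you relax it to match the stated constant $\tfrac{M^2+3}{6}e^{-\rho(t-1)}$. The paper's approach, on the other hand, isolates the entropy-decay proposition as a standalone tool that may transfer more readily to non-reversible or controlled settings.
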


\begin{rem}
Denote $m_t = \Law(X_t)$ for $t \geqslant 0$.
In other words, if the initial distribution $m_0$ has a finite Gaussian moment,
that is, there exists $\varepsilon > 0$ such that
$\int \exp (\varepsilon \lvert x\rvert^2) m_0(\dd x) < \infty$,
then $m_t$ has a uniform Gaussian tail for sufficiently large $t$.
The Gaussian integrability is indeed necessary.
Take the example of Ornstein--Uhlenbeck semigroup where
$U(x) = \lvert x\rvert^2\!/2$.
By explicit computations, we can prove that for all $t > 0$,
$m_t$ has a finite Gaussian moment if and only if
$m_0$ has a finite Gaussian moment.
\end{rem}

Note that our diffusion process \eqref{eq:ps} has the Langevin potential
\[
U^N(\vect x) \coloneqq N F(\mu_{\vect x})
\]
with second-order derivatives
\[
\nabla^2_{i,j}U^N(\vect x) = \nabla D_m F(\mu_{\vect x}, x^i) \1_{i = j}
+ \frac 1N D_m^2F (\mu_{\vect x}, x^i, x^j).
\]
We then apply Theorem~\ref{thm:concentration}
by plugging $m_* \to m^N_*$, $U \to U^N$ with constants
$M \to M^F_{mm} + M^F_{mx}$, $\rho \to \rho^N$
and with the $1/\sqrt N$-Lipschitz test function
\[
\R^{Nd} \ni \vect x = (x^1, \ldots, x^N)
\mapsto \frac{f(x^1) + \cdots + f(x^N)}{N} \in \R.
\]
Immediately we get the following result.

\begin{cor}
Under the setting of Theorem~\ref{thm:lsi},
for all $t \geqslant 1$, $r \geqslant 0$ and
all $1$-Lipschitz function $f \colon \R^d \to \R$,
\begin{multline*}
\Proba \biggl[ \frac{f(X^1_t) + \cdots + f(X^N_t)}{N}
- \Expect [ f(X_*) ] \geqslant r\biggr] \\
\leqslant \int_{\R^{Nd}}
\exp \biggl( \frac{(M^F_{mm} + M^F_{mx})^2+3}{6}
e^{-\rho^N(t-1)} W_2^2(\delta_{\vect x}, m^N_*)\biggr)
m^N_0(\dd\vect x) \\
\exp \biggl( - \frac{N\rho^N r^2}{4} \biggr),
\end{multline*}
where $X_*$ is distributed as the $1$-marginal of $m^N_*$.
\end{cor}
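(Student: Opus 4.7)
The plan is to invoke Theorem~\ref{thm:concentration} directly on the particle system~\eqref{eq:ps}, viewed as an overdamped Langevin diffusion on $\R^{Nd}$ with potential $U^N(\vect x) = NF(\mu_{\vect x})$ and reversible measure $m^N_*$. Theorem~\ref{thm:lsi} already supplies the $\rho^N$-log-Sobolev inequality for $m^N_*$, so only two elementary points remain to be verified: a uniform bound on $\nabla^2 U^N$, and the Lipschitz constant of the appropriate test function on $\R^{Nd}$.

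First I would establish the Hessian estimate $\|\nabla^2 U^N\|_{\mathrm{op}} \leqslant M^F_{mm} + M^F_{mx}$. Splitting the formula displayed just above the corollary into a block-diagonal part $A$ with diagonal blocks $\nabla D_m F(\mu_{\vect x}, x^i)$ and a cross part $B$ with blocks $N^{-1} D_m^2 F(\mu_{\vect x}, x^i, x^j)$, assumption~(2) gives $\|A\|_{\mathrm{op}} \leqslant M^F_{mx}$, while for any $\vect v, \vect w \in \R^{Nd}$ assumption~(1) and Cauchy--Schwarz yield
\[
\vect v^\top\` B\. \vect w
\leqslant \frac{M^F_{mm}}{N} \Bigl(\sum_i |v^i|\Bigr)\Bigl(\sum_j |w^j|\Bigr)
\leqslant M^F_{mm}\, |\vect v|\,|\vect w|,
\]
so $\|B\|_{\mathrm{op}} \leqslant M^F_{mm}$ and the triangle inequality closes the step.

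Second, given a $1$-Lipschitz $f \colon \R^d \to \R$, I set $g(\vect x) \coloneqq N^{-1} \sum_{i=1}^N f(x^i)$; Cauchy--Schwarz gives $|g(\vect x) - g(\vect y)| \leqslant N^{-1/2} |\vect x - \vect y|$, so $\sqrt N\, g$ is $1$-Lipschitz on $\R^{Nd}$. Applying Theorem~\ref{thm:concentration} to the rescaled test function $\sqrt N\, g$ against the threshold $\sqrt N\, r$ produces exactly the stated inequality, once one observes that by exchangeability of $m^N_*$ one has $m^N_*[g] = \Expect[f(X_*)]$ and that $\rho^N (\sqrt N\, r)^2/4 = N\rho^N r^2/4$.

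The only mildly non-trivial step is the Hessian bound, since estimating the $N^2$ cross-block contributions of $B$ entry by entry would waste the prefactor $1/N$ and leave an unbounded $O(N)$ constant; the point is to absorb it via an $\ell^1$--$\ell^2$ Cauchy--Schwarz along the empirical sum, which is exactly the mean-field scaling built into the definition of $U^N$. Everything else is a direct substitution into Theorem~\ref{thm:concentration}.
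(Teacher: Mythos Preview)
Your proposal is correct and follows exactly the approach indicated in the paper, which simply states the substitutions $m_* \to m^N_*$, $U \to U^N$, $M \to M^F_{mm} + M^F_{mx}$, $\rho \to \rho^N$ and the $1/\sqrt N$-Lipschitz test function before declaring the corollary ``immediate.'' In fact you supply more detail than the paper does, since the Hessian bound on $U^N$ via the $\ell^1$--$\ell^2$ Cauchy--Schwarz on the cross blocks is left entirely to the reader there.
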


We give the proofs of the theorems in the following two sections respectively
and make some additional comments in the end.

\section{Proof of Theorem~\ref{thm:lsi}}

The proof consists of three steps.

\proofstep{Step 1: Defective log-Sobolev inequality}
This step has essentially been established in the previous work \cite{ulpoc}.
In the end of the proof of Theorem 1.12 in that article,
we established the following functional inequality:
for all probability measure $m^N$ on $\R^{Nd}$
such that $\dd m^N\!/\!\dd m^N_*$ is $\mathcal C^1_\textnormal{b}$,
and for all $\varepsilon \in (0,1)$,
\begin{IEEEeqnarray*}{rCl}
I(m^N | m^N_*)
&\geqslant & 2\biggl( (1-\varepsilon) \rho
- \frac{M^F_{mm}}{N}
\Bigl( 8 + 6 (\varepsilon^{-1} - 1) \frac{M^F_{mm}}{\rho} \Bigr) \biggr)
\bigl( \mathcal F^N(m^N) - N \mathcal F(m_*) \bigr) \\
&& \quad\adjustbin - 2d M^F_{mm} \Bigl( 5 + 3 (\varepsilon^{-1} - 1)
\frac{M^F_{mm}}{\rho} \Bigr)\\
&\eqqcolon& 2 \rho'
\bigl( \mathcal F^N(m^N) - N \mathcal F(m_*) \bigr)
- \delta,
\end{IEEEeqnarray*}
where $\mathcal F^N$ and $\mathcal F$ denote respectively
\begin{align*}
\mathcal F^N(m^N) &\coloneqq N \int_{\R^{Nd}} F(\mu_{\vect x}) m^N(\dd\vect x)
+ H(m^N), \\
\mathcal F(m) &\coloneqq F(m) + H(m),
\end{align*}
and $m_*$ is the unique probability measure on $\mathbb R^d$
that reaches the minimum of $\mathcal F$.
See Section~4 of \cite{ulpoc} for details and note that
the definition for the log-Sobolev constant there differs by a factor of $2$.
Again, Lemma~5.2 in the article gives that
for all $m^N \in \mathcal P_2(\mathbb R^{Nd})$ with finite entropy,
\[
\mathcal F^N(m^N) - N\mathcal F(m_*) \geqslant H(m^N | m_*^{\otimes N}).
\]
Taking $m^N = m^N_*$ yields
\[
\mathcal F^N(m^N_*) - N\mathcal F(m_*) \geqslant H(m^N_* | m_*^{\otimes N})
\geqslant 0.
\]
Thus,
\[
\mathcal F^N(m^N) - N\mathcal F(m_*)
\geqslant \mathcal F^N(m^N) - \mathcal F^N(m^N_*) = H(m^N | m^N_*).
\]
Combining the functional inequality and the inequality above,
we deduce that
\[
I(m^N | m^N_*) \geqslant 2 \rho' H(m^N | m^N_*) - \delta,
\]
which is a defective log-Sobolev inequality for the Gibbs measure $m^N_*$.
In the following we say that a measure satisfies a $(\rho',\delta)$%
-defective log-Sobolev inequality if the inequality above
is satisfied when $m^N_*$ is replaced by that measure.
To recover the usual form of defective log-Sobolev,
we denote $\phi (t) = t \log t$ for $t \geqslant 0$
and introduce the new variable $f = \sqrt{\dd m^N\!/\!\dd m^N_*}$.
As $\rho' > 0$, the defective log-Sobolev then reads:
for all $f \in \mathcal C^1_\textnormal{b}$,
\begin{equation}
\label{eq:ps-gibbs-dlsi}
\int_{\R^{Nd}} \phi(f^2) \dd m^N_*
- \phi \biggl( \int_{\R^{Nd}} f^2 \dd m^N_* \biggr)
\leqslant \frac{2}{\rho'}
\int_{\R^{Nd}} \lvert \nabla f\rvert^2 \dd m^N_*
+ \frac{\delta}{2\rho'} \int_{\R^{Nd}} f^2 \dd m^N_*,
\end{equation}
which is in line with Definition~5.1.1 of \cite{BGLMarkov}.

\proofstep{Step 2: Poincaré inequality}
In this step, we follow the approach in \cite{GLWZUPLSI},
especially that of Example~2 therein,
to prove a uniform Poincaré inequality for $m^N_*$.
Recall that the measure $m^N_*$ corresponds to the Langevin potential
\[
U^N(\vect x) = NF(\mu_{\vect x})
\]
and denote
\[
\mathcal L^N \coloneqq \Delta - \nabla U^N \cdot \nabla
= \Delta - \sum_{i=1}^N D_m F(\mu_{\vect x}, x^i) \cdot \nabla_i.
\]
According to Proposition~4.8.3 in \cite{BGLMarkov},
the Poincaré inequality \eqref{eq:ps-gibbs-pi} is equivalent to the following
``second-order'' inequality: for all $f \in \mathcal C^1_\textnormal{b}$,
\begin{equation}
\label{eq:ps-gibbs-second-pi}
\rho \int_{\R^{Nd}} \lvert \nabla f\rvert^2 \dd m^N_*
\leqslant \int_{\R^{Nd}} (\mathcal L^N\!f)^2 \dd m^N_*.
\end{equation}
We aim to prove this inequality in the following.

By the $\Gamma_{\`2}$ calculus
(see Proposition~3.3.16 of \cite{BGLMarkov}),
the right-hand side satisfies
\begin{IEEEeqnarray*}{rCl}
\int_{\R^{Nd}} (\mathcal L^N\!f)^2 \dd m^N_*
&=& \int_{\R^{Nd}} \Gamma_{\`2} (f) \dd m^N_* \\
&\coloneqq& \sum_{i,j=1}^N \int_{\R^{Nd}}
(\lvert \nabla^2_{i,j} f\rvert_\HS^2
+ \nabla_i f \nabla^2_{i,j} U^N \nabla_j f) \dd m^N_*,
\end{IEEEeqnarray*}
where $\lvert \cdot \rvert_\HS$ denotes the Hilbert--Schmidt norm of matrices.
It remains to lower bound the $\Gamma_{\`2}$ term on the right.
Note that the Hessian of $U^N$ reads
\[
\nabla^2_{i,j} U^N (\vect x)
= \frac 1N D_m^2 F(\mu_{\vect x}, x^i, x^j)
+ \nabla D_m F(\mu_{\vect x}, x^i) \1_{i = j},
\]
where $\1_{\cdot}$ is the indicator function.
Thus,
\begin{IEEEeqnarray*}{rCl}
\Gamma_{\`2}(f)
&\geqslant& \sum_{i=1}^N \,\lvert \nabla^2_i f \rvert^2_\HS
+ \nabla_i f \nabla D_m F(\mu_{\vect x}, x^i) \nabla_i f \\
&& \quad\adjustbin
+ \frac 1N \sum_{i,j = 1}^N
\nabla_i f D_m^2 F(\mu_{\vect x}, x^i, x^j) \nabla_j f.
\end{IEEEeqnarray*}
To proceed, we need the following lemma on flat convexity.

\begin{lem}
\label{lem:convexity}
Let $W \colon \R^d \times \R^d \to \R$ be $\mathcal C^2$ continuous.
Suppose that $W$ is of positive type, that is,
for all signed measure $\mu$ on $\R^d$ with $\int_{\R^d} \dd \mu = 0$,
\[\iint_{\R^d \times \R^d} W(x,x') \mu^{\otimes 2}(\dd x\dd x') \geqslant 0.\]
Then, for all integer $N \geqslant 1$,
and $v^i$, $x^i \in \R^d$ for $i = 1$, \dots, $N$,
\[
\sum_{i,j=1}^N v^i \nabla^2_{1,2} W(x^i, x^j) v^j \geqslant 0,
\]
where $\nabla^2_{1,2}$ means the composition
of the partial differential operators
with respect to the first and the second variable.
\end{lem}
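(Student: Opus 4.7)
The plan is to derive this pointwise second-order statement by testing the integral positive-type hypothesis against a carefully chosen family of singular, zero-mass signed measures supported near the prescribed points $x^i$, and then passing to a limit. The intuition is that $\nabla^2_{1,2} W$ is the quadratic form that emerges when one takes the second derivative of $W$ along a variation that shifts each atom of a counting measure slightly in the direction of the corresponding $v^i$.

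Concretely, I would fix $\varepsilon > 0$ and set
\[
\mu_\varepsilon \coloneqq \sum_{i=1}^N \bigl( \delta_{x^i + \varepsilon v^i} - \delta_{x^i} \bigr),
\]
which is a signed measure with $\int_{\R^d} \dd\mu_\varepsilon = 0$. The positive-type assumption applied to $\mu_\varepsilon$ gives
\[
\sum_{i,j=1}^N \bigl[ W(x^i + \varepsilon v^i, x^j + \varepsilon v^j) - W(x^i + \varepsilon v^i, x^j) - W(x^i, x^j + \varepsilon v^j) + W(x^i, x^j) \bigr] \geqslant 0.
\]
Each bracketed term is the mixed second-order finite difference of $W$ at $(x^i, x^j)$ in the directions $(\varepsilon v^i, \varepsilon v^j)$. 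A $\mathcal C^2$ Taylor expansion around $(x^i, x^j)$ shows that all zeroth- and first-order contributions cancel, and what remains equals $\varepsilon^2 v^i \nabla^2_{1,2} W(x^i, x^j) v^j + o(\varepsilon^2)$. Dividing the inequality by $\varepsilon^2$ and letting $\varepsilon \to 0$ yields the claim.

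I do not anticipate a substantive obstacle: the sum is finite and the expansion is elementary, so joint continuity of $\nabla^2_{1,2} W$ on a bounded neighbourhood of the $(x^i, x^j)$ suffices to control the remainders uniformly in $(i,j)$. The only point that requires a moment of care is verifying that the mixed finite difference annihilates functions affine in either variable separately, which is what makes the linear-in-$\varepsilon$ terms drop out and isolates the Hessian $\nabla^2_{1,2} W$; this follows at once from the explicit sign pattern $(+,-,-,+)$ of the four-point combination.
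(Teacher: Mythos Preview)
Your argument is correct and coincides with the paper's own proof: the paper likewise tests the positive-type hypothesis on $\mu_h \coloneqq \sum_{i=1}^N (\delta_{x^i + h v^i} - \delta_{x^i})$ and obtains the claim as $\lim_{h\searrow 0} h^{-2}\iint W\,\dd\mu_h^{\otimes 2}$. The only difference is cosmetic---you spell out the four-term finite difference and the Taylor expansion, whereas the paper compresses this into a single limit statement.
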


\begin{proof}[Proof of Lemma~\ref{lem:convexity}]
Let $v_i$, $x_i$ be as in the statement and let $h > 0$.
Form the empirical measure
\[
\mu_h \coloneqq \sum_{i=1}^N \delta_{x^i + h v^i} - \delta_{x^i}.
\]
Since $W$ is of positive type, we have
\[
\iint_{\R^d \times \R^d} W \dd \mu_h^{\otimes 2} \geqslant 0.
\]
To conclude, it suffices to note that by the $\mathcal C^2$ continuity of $W$,
\[
\sum_{i,j=1}^N v^i \nabla^2_{1,2} W(x^i, x^j) v^j
= \lim_{h \searrow 0} \frac 1{h^2}
\iint_{\R^d \times \R^d} W \dd \mu_h^{\otimes 2}. \qedhere
\]
\end{proof}

The flat convexity of $F$ implies that
for all $m \in \mathcal P_2(\R^d)$,
the second-order flat derivative
$\frac{\delta^2\`F}{\delta m^2}(m,\cdot,\cdot)$
is a function of positive type.
Thus, taking
$W = \frac{\delta^2\`F}{\delta m^2}(\mu_{\vect x}, \cdot, \cdot)$
and plugging in $v^i = \nabla_i f$, we get that for all $\vect x \in \R^{Nd}$,
\[
\sum_{i,j=1}^N
\nabla_i f(\vect x) D_m^2 F(\mu_{\vect x}, x^i, x^j) \nabla_j f(\vect x)
\geqslant 0.
\]
It follows that
\begin{IEEEeqnarray*}{rCl}
\IEEEeqnarraymulticol{3}{l}
{\int_{\R^{Nd}} \Gamma_{\`2}(f) \dd m^N_*} \\
\quad&\geqslant& \sum_{i=1}^N \int_{\R^{Nd}}
\bigl(\lvert \nabla_i f\rvert_\HS^2
+ \nabla_i f\nabla D_m F(\mu_{\vect x}, x^i) \nabla_i f\bigr)
m^N_*(\dd\vect x) \\
&\geqslant& \sum_{i=1}^N \iint_{\R^{Nd}}
\bigl(\lvert \nabla_i f\rvert_\HS^2
+ \nabla_i f\nabla D_m F(\mu_{\vect x}, x^i) \nabla_i f\bigr)
m^{N,i|-i}_* (\dd x^i | \vect x^{-i}) m^{N,-i}_* (\dd\vect x^{-i}),
\end{IEEEeqnarray*}
where $\vect x^{-i} \coloneqq (x^j)_{j \neq i}$
and $m^{N,i|-i}_{*}$ is the conditional distribution of the $i$-th particle
given the position of all other particles.
The density of the conditional measure $m^{N,i|-i}_*$ verifies
\[
- \nabla^2_i \log m^{N,i|-i}_* (x^i | \vect x^{-i})
= \nabla D_m F(\mu_{\vect x}, x^i) + \frac 1N D_m^2 F(\mu_{\vect x}, x^i, x^i),
\]
where an extra term involving $D_m^2F$ appears.
By our assumption, the conditional measure
satisfies a $\rho$-Poincaré inequality
and the equivalent form of the Poincaré shows that
for all $\vect x^{-i}$,
\begin{multline*}
\int_{\R^{d}}
\biggl(\lvert \nabla_i f\rvert_\HS^2
+ \nabla_i f \Bigl(
\nabla D_m F(\mu_{\vect x}, x^i) + \frac 1N D_m^2F(\mu_{\vect x},x^i,x^i)
\Bigr) \nabla_i f\biggr)
m^{N,i|-i}_* (\dd x^i | \vect x^{-i}) \\
\geqslant \rho \int_{\R^d} \lvert \nabla_i f\rvert^2
m^{N,i|-i}_*(\dd x^i|\vect x^{-i}).
\end{multline*}
According to our assumption, the extra term satisfies
\[
\lvert \nabla_i f D_m^2 F(\mu_{\vect x}, x^i, x^i) \nabla_i f\rvert
\leqslant M^F_{mm} \lvert \nabla_i f\rvert^2.
\]
Integrating against the marginal distribution $m^{N,-i}_*$ and summing over $i$,
we get
\begin{multline*}
\sum_{i,j=1}^N\int_{\R^{Nd}}
\bigl(\lvert \nabla_i f\rvert_\HS^2
+ \nabla_i f \nabla D_m F(\mu_{\vect x}, x^i) \nabla_i f\bigr)
m^{N}_* (\dd \vect x) \\
\geqslant \biggl(\rho - \frac{M^F_{mm}}{N}\biggr)
\sum_{i = 1}^N\int_{\R^{Nd}} \lvert \nabla_i f\rvert^2
m^{N}_* (\dd \vect x).
\end{multline*}
So by combining the lower bound on
$\int_{\R^{Nd}} \Gamma_{\`2}(f)\dd m^N_*$,
the desired second-order Poincaré inequality
\eqref{eq:ps-gibbs-second-pi} is proved with constant $\rho - M^F_{mm} / N$.
The original Poincaré inequality \eqref{eq:ps-gibbs-pi} follows
by the equivalence mentioned above.

\proofstep{Step 3: Tightening}
By the standard tightening method (see Proposition~5.1.3 of \cite{BGLMarkov}),
the defective log-Sobolev \eqref{eq:ps-gibbs-dlsi}
and the Poincaré \eqref{eq:ps-gibbs-pi}
imply a log-Sobolev inequality with constant
\[
\biggl(
\frac{1}{\rho'} + \frac{1}{\rho - M^F_{mm}/N}
\Bigl( \frac{\delta}{4\rho'} + 1\Bigr)
\biggr)^{\!-1}.
\]
However, the constant above does not tend to $\rho$
under the limit $\delta \to 0$ due to the constant $1$ term.
This term is
introduced by Rothaus's lemma (see Lemma 5.1.4 of \cite{BGLMarkov})
employed in the usual log-Sobolev tightening proof.
To avoid this behavior, we give a new proof of the tightening
based on hypercontractivity.

\begin{prop}
\label{prop:tightening}
Let $\mu$ be a probability measure on $\R^d$.
Suppose that it admits density $\mu(x) = \exp \bigl( - U(x)\bigr)$
for some $U \in \mathcal C^2(\R^d; \R)$ with bounded $\nabla^2U$.
Suppose that $\mu$ satisfies
a $(\rho_1, \delta)$-defective log-Sobolev inequality
and a $\rho_2$-Poincaré inequality.
Then $\mu$ satisfies a log-Sobolev inequality with constant
\[
\frac{\rho_1\rho_2}{\rho_2 + \delta/4}.
\]
\end{prop}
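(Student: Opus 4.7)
To sidestep Rothaus's lemma---the culprit behind the spurious $+\rho_1$ in the denominator of the classical tightening constant---I would work at the level of the Markov semigroup $(P_t)_{t \geqslant 0}$ generated by $L = \Delta - \nabla U \cdot \nabla$, which is reversible with respect to $\mu$, and combine the two hypotheses through hypercontractivity. First, the $(\rho_1,\delta)$-defective LSI translates, via the classical Gross derivation applied to $t \mapsto \|P_t f\|_{q(t)}^{q(t)}$ along the curve $q(t) - 1 = e^{2\rho_1 t}$, into the defective hypercontractive estimate
\[
\|P_t f\|_{q(t)}
\leqslant \exp\Bigl(\tfrac{\delta}{2\rho_1}\bigl(\tfrac{1}{2} - \tfrac{1}{q(t)}\bigr)\Bigr) \|f\|_2
\qquad \text{for all } f \geqslant 0;
\]
the boundedness of $\nabla^2 U$ enters only to ensure that $(P_tf)^{q(t)/2}$ is an admissible test function in the defective LSI. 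On the other hand, the Poincaré inequality provides the complementary $L^2$-exponential contraction $\|P_t g\|_2 \leqslant e^{-\rho_2 t}\|g\|_2$ for every mean-zero $g$.

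The two ingredients are then combined through the semigroup identity $P_T = P_{t_1}\!\circ\!P_{t_2}$: applying the defective hypercontractivity at time $t_1$ to the already $L^2$-contracted image $P_{t_2}\!f$ yields, for centered $f$,
\[
\|P_T\!f\|_{q(t_1)}
\leqslant \exp\Bigl(\tfrac{\delta}{2\rho_1}\bigl(\tfrac{1}{2} - \tfrac{1}{q(t_1)}\bigr) - \rho_2 t_2\Bigr) \|f\|_2.
\]
Choosing the split $t_1/T = \rho/\rho_1$ with $\rho = \rho_1 \rho_2 / (\rho_2 + \delta/4)$ aligns $q(t_1)$ with the tight hypercontractive exponent $1 + e^{2\rho T}$ at time $T$, and the cancellation of the defect against the Poincaré contraction reduces to a pointwise inequality in $T$ which, using the bound $e^x/(1+e^x)^2 \leqslant 1/4$ applied at $x = 2\rho T$, is verified for every $T \geqslant 0$. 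This delivers tight hypercontractivity at rate $\rho$ for centered functions, and the claimed log-Sobolev constant then follows by the reverse Gross argument.

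The main difficulty I anticipate is not in the pointwise verification, which is routine, but in bridging the mismatch between Gross's hypercontractive framework, which is naturally phrased for nonnegative $f$ so that $(P_tf)^{q/2}$ is well-defined, and the Poincaré contraction, which applies cleanly only on the mean-zero subspace. Reconciling the two requires tracking the mean $\mu[f]$ separately and exploiting the fact that constants are preserved both by $P_t$ and by the $L^p$ norms, so that the defect in the hypercontractive estimate is only charged to the variance part of $f$. Once this technicality is settled, the brief computation above yields the stated constant $\rho_1 \rho_2 / (\rho_2 + \delta/4)$.
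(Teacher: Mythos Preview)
Your approach is essentially the paper's: convert the defective LSI into defective hyperboundness via the forward Gross argument, compose with the $L^2$ spectral-gap contraction coming from Poincar\'e, and recover the tight LSI with constant $\rho_1\rho_2/(\rho_2+\delta/4)$ by the reverse Gross argument. The only cosmetic difference is that the paper argues infinitesimally---choosing $t_1=\rho_2 h$, $t_2=\delta h/4$ and sending $h\to 0$---whereas you verify the hypercontractive bound for all $T$ via $\tanh(y)\leqslant y$; the mean-zero versus nonnegative mismatch you single out at the end is real and is glossed over in the paper's presentation as well (it writes $\lVert P_{t_2}\rVert_{2\to2}\leqslant e^{-\rho_2 t_2}$, which literally holds only on the orthocomplement of constants).
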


\begin{proof}[Proof of Proposition~\ref{prop:tightening}]
Denote
\begin{align*}
\mathcal L &\coloneqq \Delta - \nabla U \cdot \nabla, \\
P_t &\coloneqq \exp( t\mathcal L).
\end{align*}
It is well known that by the argument of Gross
(see Theorem~5.2.3 of \cite{BGLMarkov}),
the $(\rho_1,\delta)$-defective log-Sobolev
implies the hyperboundness of the diffusion semi-group $P_t$:
for all $t_1 \in (0,1]$,
\[
\lVert P_{t_1} \rVert_{2 \to q(t_1)} \leqslant \exp \bigl( M(t_1) \bigr)
\]
where on the left $\lVert\cdot\rVert_{2\to q(t_1)}$ denotes the operator norm
from $L^2(\mu)$ into $L^{q(t_1)}(\mu)$
and $q$, $M$ are defined by
\begin{align*}
q(t_1) &\coloneqq 1 + \exp(2\rho_1 t_1) = 2 + 2\rho_1 t_1 + O(t_1^2), \\
M(t_1) &\coloneqq \frac{\delta}{2\rho_1}
\biggl( \frac 12 - \frac 1{q(t_1)} \biggr)
= \frac{\delta t_1}{4} + O(t_1^2).
\end{align*}
The Poincaré inequality, on the other hand,
implies the contraction in $L^2(\mu)$
(see Section~4.2.2 of \cite{BGLMarkov}):
for all $t_2 \in (0,1]$,
\[
\lVert P_{t_2} \rVert_{2 \to 2} \leqslant \exp(-\rho_2 t_2).
\]
Thus, by chaining the two estimates,
\[
\lVert P_{t_1 + t_2} \rVert_{2 \to q(t_1)}
\leqslant \exp \biggl( \frac{\delta t_1}{4} - \rho_2 t_2 + O(t_1^2) \biggr).
\]
We take $t_1 = \rho_2 h$ and $t_2 = \delta h/4$ for some $h$ sufficiently small.
Then we get
\[
\lVert P_{(\rho_2 + \delta /4) h} \rVert_%
{2 \to 2 + 2\rho_1 \rho_2 h + O(h^2)}
\leqslant 1 + O(h^2).
\]
By the reverse argument of Gross
(see again Theorem~5.2.3 of \cite{BGLMarkov}
or, more explicitly, Théorème~2.8.5 of \cite{LSIToulouse}),
this implies that $\mu$ satisfies a log-Sobolev inequality with constant
\[
\frac{\rho_1\rho_2}{\rho_2 + \delta/4}. \qedhere
\]
\end{proof}

Having shown the alternative tightening result,
we can now apply it to obtain that $m^N_*$ satisfies a log-Sobolev inequality
with constant
\[
\biggl( \frac{1}{\rho'}
+ \frac{\delta}{4\rho'(\rho - M^F_{mm}/N)} \biggr)^{\!-1}
\]
Inserting the explicit expressions for $\rho'$ and $\delta$
in the first step concludes. \qed

\section{Proof of Theorem~\ref{thm:concentration}}

We follow the approach in the recent work of Jackson and Zitridis
\cite{JacksonZitridisConcentration} to prove the concentration of measure.
In this method, the most important step
is to establish the following ``controlled'' entropy decay estimate.

\begin{prop}
\label{prop:entropy-decay}
Let $t \geqslant 1$.
Under the setting of Theorem~\ref{thm:concentration},
suppose that $(\mu_s)_{s \in [0,t]}$ is a flow of probability measures solving
\begin{equation}
\label{eq:eom-Y}
\dd Y_s = \bigl( - \nabla U(Y_s) + \alpha(s,Y_s) \bigr) \dd s
+ \sqrt 2 \dd B'_s,\qquad \mu_s = \Law(Y_s)
\end{equation}
for some $\alpha \in \mathcal C ([0,t] \times \R^d ; \R^d)$
with $\nabla\alpha \in L^1\bigl([0,t]; L^\infty(\R^d)\bigr)$,
where $B'_s$ is another Brownian motion.
Then,
\[
H(\mu_t | m_*)
\leqslant \frac{M^2+3}{3}e^{-\rho(t-1)} W_2^2(\mu_0,m_*)
+ \frac 12 \int_0^t\!\!\int_{\R^d}
\lvert \alpha(s,y)\rvert^2 \mu_s(\dd y) \dd s.
\]
\end{prop}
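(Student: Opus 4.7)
The plan is to combine an entropy-dissipation inequality along the perturbed flow $(\mu_s)$ with a short-time Wasserstein-to-entropy regularisation for the reference Langevin semigroup, gluing the two via Gronwall's lemma on the long interval $[1,t]$.

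First I would derive the basic entropy production identity. Writing the Fokker--Planck equation associated with \eqref{eq:eom-Y} in the form $\partial_s\mu_s = \nabla\cdot\bigl(m_*\nabla(\mu_s/m_*)\bigr) - \nabla\cdot(\mu_s\alpha)$ and integrating by parts against $\log(\mu_s/m_*)$ yields
\[
\frac{\dd}{\dd s} H(\mu_s | m_*)
= -I(\mu_s | m_*) + \int_{\R^d} \alpha(s,y)\cdot \nabla\log\frac{\mu_s}{m_*}(y)\,\mu_s(\dd y),
\]
and a Young inequality with weight $\tfrac12$ absorbs half of the Fisher term into
\[
\frac{\dd}{\dd s} H(\mu_s | m_*) \leqslant
-\tfrac12 I(\mu_s | m_*) + \tfrac12\int_{\R^d}\lvert\alpha(s,y)\rvert^2\mu_s(\dd y).
\]
On the interval $[1,t]$ the $\rho$-log-Sobolev inequality for $m_*$ gives $I\geqslant 2\rho H$, turning this into $\partial_s H\leqslant -\rho H + \tfrac12\int\lvert\alpha\rvert^2\dd\mu_s$; a Gronwall argument then produces
\[
H(\mu_t|m_*) \leqslant e^{-\rho(t-1)} H(\mu_1|m_*)
+ \tfrac12\int_1^t\!\!\int_{\R^d}\lvert\alpha(s,y)\rvert^2\mu_s(\dd y)\dd s.
\]

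It therefore suffices to establish the short-time bound
\[
H(\mu_1|m_*) \leqslant \frac{M^2+3}{3} W_2^2(\mu_0, m_*)
+ \tfrac12\int_0^1\!\!\int_{\R^d}\lvert\alpha(s,y)\rvert^2\mu_s(\dd y)\dd s,
\]
and this short-time Wasserstein-to-entropy regularisation is where I expect the main difficulty to lie. The natural strategy is to compare $\mu_1$ to the unperturbed Langevin image $\tilde\mu_1 \coloneqq P_1\mu_0$: the entropy cost $H(\mu_1|\tilde\mu_1)$ of the drift perturbation is controlled by a Girsanov computation on the path law of $Y$, producing exactly $\tfrac12\int_0^1\int\lvert\alpha\rvert^2\dd\mu_s\dd s$, whereas the residual term $H(\tilde\mu_1|m_*)$ must be bounded by a constant multiple of $W_2^2(\mu_0,m_*)$. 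This residual regularisation is where the bounded Hessian assumption $\lVert\nabla^2 U\rVert\leqslant M$ enters: a short-time log-Harnack inequality of F.-Y.~Wang type for semigroups with semi-convex potential, or equivalently an ordinary differential inequality in $s\in[0,1]$ between entropy and Wasserstein distance along the reference flow as in Jackson and Zitridis \cite{JacksonZitridisConcentration}, should deliver the explicit constant $(M^2+3)/3$ after optimising on the unit time interval. Combining the two displays and using $e^{-\rho(t-1)}\leqslant 1$ in front of the Girsanov contribution from $[0,1]$ then gives the stated bound.
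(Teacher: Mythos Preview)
Your long-time step on $[1,t]$ is exactly what the paper does: entropy dissipation along the perturbed Fokker--Planck, Young's inequality to absorb half of the Fisher information, log-Sobolev to convert $\tfrac12 I$ into $\rho H$, and Gr\"onwall. That part is fine.

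The gap is in the short-time step. Your plan is to pass through the intermediate measure $\tilde\mu_1 = P_1\mu_0$ and bound $H(\mu_1|\tilde\mu_1)$ by Girsanov and $H(\tilde\mu_1|m_*)$ by a Wang-type log-Harnack for the \emph{unperturbed} semigroup. But relative entropy has no triangle inequality: there is no general bound of the form $H(\mu_1|m_*)\leqslant H(\mu_1|\tilde\mu_1)+H(\tilde\mu_1|m_*)$, and the exact identity
\[
H(\mu_1|m_*)=H(\mu_1|\tilde\mu_1)+\int_{\R^d}\log\frac{\dd\tilde\mu_1}{\dd m_*}\,\dd\mu_1
\]
leaves you with a cross term that is \emph{not} $H(\tilde\mu_1|m_*)$ and that you have no control over. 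So the two pieces do not glue. (A path-level variant---comparing the law of $Y$ on $[0,1]$ to the stationary Langevin path law---does glue, but yields $H(\mu_0|m_*)$ instead of $W_2^2(\mu_0,m_*)$, i.e.\ no regularisation.)

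The paper avoids this by performing the Girsanov argument and the Wasserstein coupling \emph{simultaneously}, not sequentially. One couples $Y_0$ optimally to $X^*_0\sim m_*$, drives a bridge $Y_s$ with drift $-\nabla U(X^*_s)-(Y_0-X^*_0)$ so that $Y_1=X^*_1$ deterministically, and then changes measure so that under $\mathbb Q$ the process $Y$ solves \eqref{eq:eom-Y}. The single Girsanov density now absorbs \emph{both} the bridge correction $-\nabla U(X^*_s)+\nabla U(Y_s)-(Y_0-X^*_0)$ and the perturbation $-\alpha(s,Y_s)$; bounding $\lvert\xi_s\rvert^2$ with the Hessian estimate $\lvert\nabla U(X^*_s)-\nabla U(Y_s)\rvert\leqslant M(1-s)\lvert Y_0-X^*_0\rvert$ and integrating $\int_0^1\bigl(M^2(1-s)^2+1\bigr)\dd s=\frac{M^2+3}{3}$ produces the constant directly. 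This is a log-Harnack inequality for the \emph{perturbed} semigroup, not for $P_1$, and that is precisely what lets you land on $H(\mu_1|m_*)$ without a triangle step.
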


\begin{proof}[Proof of Proposition~\ref{prop:entropy-decay}]
The proof consists of two steps: short-time regularization
from Wasserstein to entropy
and long-time contraction in entropy.

\proofstep{Step 1: Short-time regularization}
In this step, we apply the \emph{coupling by change of measure} method
developed by Guillin, F.-Y.~Wang and P.~Ren in a series of works
\cite{GuillinWangDegenerate,
WangHypercontractivityHamiltonian, RenWangExponential}.
Let $(X^*_s)_{s \in [0,1]}$ be a stationary overdamped Langevin process
\[
\dd X^*_s = - \nabla U(X^*_s) \dd s + \sqrt 2 \dd B_s,
\qquad \Law(X^*_s) = m_*
\]
defined on the probability space $(\Omega, \mathbb F, \mathbb P)$.
Let the initial values of the processes $X^*$, $Y$ be optimally coupled:
\[
\Expect [ \lvert Y_0 - X^*_0 \rvert^2 ] = W_2^2(\mu_0, m_*).
\]
Let us define the process $(Y_s)_{s \in [0,1]}$ by
\[
\dd Y_s = - \nabla U(X^*_s) \dd s - (Y_0 - X^*_0) \dd s + \sqrt 2\dd B_s,
\]
where, as we recall, $B_s$ is the Brownian motion driving $X^*_s$.
This construction ensures that
\[
\dd (Y_s - X^*_s) = - (Y_0 - X^*_0) \dd s.
\]
Thus,
\[
Y_s - X^*_s = (1 - s) (Y_0 - X^*_0)
\]
and in particular, $Y_1 = X^*_1$.
For $s \in [0,1]$, define the stochastic process
\[
\xi_s \coloneqq - \nabla U(X^*_s) - (Y_0 - X^*_0) + \nabla U(Y_s)
- \alpha(s, Y_s).
\]
By the Hessian bound on $U$, its norm can be bounded:
\begin{align*}
\lvert \xi_s \rvert^2
&\leqslant 4 \lvert \nabla U(X^*_s) - \nabla U(Y_s) \rvert^2
+ 2 \lvert \alpha(s,Y_s) \rvert^2
+ 4 \lvert Y_0 - X^*_0 \rvert^2 \\
&\leqslant 4 \bigl( M^2(1-s)^2 + 1 \bigr) \lvert Y_0 - X^*_0\rvert^2
+ 2 \lvert \alpha(s,Y_s) \rvert^2.
\end{align*}
By the definition of $\xi_s$, the process $Y$ satisfies
\[
\dd Y_s = \bigl( - \nabla U(Y_s) + \alpha(s,Y_s) \bigr) \dd s
+ \xi_s \dd s + \sqrt 2 \dd B_s,
\]
and thus, for
\[
B'_s \coloneqq B_s + \frac{\xi_s}{\sqrt 2} \dd s,
\]
the equation of motion \eqref{eq:eom-Y} is formally verified for $Y_s$,
except that $B'_s$ is not a Brownian motion under $\mathbb P$.
Define the exponential local martingale
\[
R_s \coloneqq \exp \biggl( - \int_0^s \frac{\xi_u}{\sqrt 2} \cdot \dd B_u
- \frac 14 \int_0^s \lvert \xi_u\rvert^2 \dd u \biggr).
\]
We can verify that $(R_s)_{s \in [0,1]}$ is a real martingale
by the method of \cite[Appendix~C]{uklpoc}.
By the Girsanov theorem, under the probability $\mathbb Q = R_1 \mathbb P$,
the process $(B'_s)_{s \in [0,1]}$ is a Brownian motion
so the equation of motion \eqref{eq:eom-Y} is really verified
under $\mathbb Q$.

Let $f$ be an arbitrary function from $\R^d$ into $[c,C]$
for some $0 < c < C < \infty$.
We have
\begin{multline*}
\Expect_{\mathbb Q} [\log f(Y_1)]
= \Expect_{\mathbb Q} [\log f(X^*_1)]
= \Expect_{\mathbb P} [R_1\log f(X^*_1)] \\
\leqslant \Expect_{\mathbb P} [R_1 \log R_1]
+ \log \Expect_{\mathbb P}[f(X^*_1)],
\end{multline*}
where the inequality is due to Young's convex duality.
Note that by the bound on $\lvert\xi_t\rvert^2$ above,
\begin{align*}
\Expect_{\mathbb P} [R_1 \log R_1]
&= \frac 14 \int_0^1 \Expect_{\mathbb Q}
[ \lvert \xi_s \rvert^2 ] \dd s \\
&\leqslant \frac {M^2 + 3}3 \Expect [ \lvert Y_0 - X^*_0 \rvert^2 ]
+ \frac 12 \int_0^1 \Expect [ \lvert \alpha(s,Y_s)\rvert^2] \dd s \\
&\leqslant \frac {M^2 + 3}3 W_2^2(\mu_0, m_*)
+ \frac 12 \int_0^1\!\!\int_{\R^d} \lvert \alpha(s,y)\rvert^2
\mu_s(\dd y)\dd s
\end{align*}
Thus, the \emph{log-Harnack inequality} of F.-Y.~Wang holds:
\[
\Expect_{\mathbb Q} [\log f(Y_1)]
\leqslant \log \Expect_{\mathbb P} [f(X^*_1)]
+ \frac {M^2 + 3}3 W_2^2(\mu_0, m_*)
+ \frac 12 \int_0^1\!\!\int_{\R^d} \lvert \alpha(s,y)\rvert^2
\mu_s(\dd y)\dd s.
\]
Taking a sequence of functions $f$ that tends to
the relative density $\dd\mu_1/\!\dd m_1$ in the log-Harnack above, we get
\begin{equation}
\label{eq:entropy-short-time}
H(\mu_1 | m_*) \leqslant \frac{M^2 + 3}{3} W_2^2(\mu_0,m_*)
+ \frac 12 \int_0^1\!\!\int_{\R^d} \lvert \alpha(s,y)\rvert^2
\mu_s(\dd y)\dd s.
\end{equation}
The proposition is proved for the case $t=1$.

\proofstep{Step 2: Long-time contraction}
For the long-time behavior, we write the Fokker--Planck equation
for the flow of measures $(\mu_s)_{s \in [1,t]}$:
\[
\partial_s \mu_s = \Delta \mu_s + \nabla \cdot
\bigl( (\nabla U - \alpha) \mu_s \bigr).
\]
Using the Fokker--Planck equations we find that
the evolution of the relative entropy $H(\mu_t|m_*)$ formally verifies
\begin{align*}
\frac{\dd H(\mu_s|m_*)}{\dd s}
&= \int_{\R^d} \nabla \log \frac{\dd \mu_s}{\dd m_*}
\cdot \biggl( -\nabla \log \frac{\dd \mu_s}{\dd m_*} + \alpha(s,\cdot) \biggr)
\dd \mu_s \\
&= - I(\mu_s | m_*)
+ \int_{\R^d} \nabla \log \frac{\dd \mu_s}{\dd m_*} \cdot \alpha(s,\cdot)
\dd \mu_s \\
&\leqslant - \frac 12 I(\mu_s | m_*)
+ \frac 12 \int_{\R^d} \lvert \alpha(s,y)\rvert^2 \mu_s(\dd y) \\
&\leqslant - \rho H(\mu_s | m_*)
+ \frac 12 \int_{\R^d} \lvert \alpha(s,y)\rvert^2 \mu_s(\dd y),
\end{align*}
where the inequality on the third line is due to Cauchy--Schwarz
and that on the last is due to the $\rho$-log-Sobolev inequality
for $m_*$.
So formally by Grönwall's lemma,
\[
H(\mu_t | m_*) \leqslant
e^{-\rho (t-1)} H(m_1 | m_*)
+ \frac 12 \int_1^t \!\! \int_{\R^d}
\lvert \alpha(s,y) \rvert^2 \mu_s(\dd y) \dd s.
\]
The arguments above can be justified by using
the approximation arguments in \cite[Section~4.2]{uklpoc}.
Inserting the upper bound \eqref{eq:entropy-short-time} on $H(\mu_1|m_*)$,
we get
\[
H(\mu_t | m_*)
\leqslant \frac{M^2+3}{3}e^{-\rho(t-1)} W_2^2(\mu_0,m_*)
+ \frac 12 \int_0^t\!\!\int_{\R^d}
\lvert \alpha(s,y)\rvert^2 \mu_s(\dd y) \dd s,
\]
which is the full claim of the proposition.
\end{proof}

Having shown the entropy decay, we are ready to prove
Theorem~\ref{thm:concentration} by convex duality arguments.
Let $m_t$ be the overdamped Langevin flow
as in the statement of the theorem.
Suppose first that its initial condition is deterministic:
$m_0 = \delta_x$ for some $x \in \R^d$.
Fix $t \geqslant 1$.
Let $f \colon \R^d \to \R$ be an arbitrary $\mathcal C^1_\textnormal{b}$
function with a Lipschitz constant $\leqslant 1$
and let $\lambda > 0$.
According to the variational formulation of relative entropy, we have
\[
\log m_t [e^{\lambda f}]
= \sup_{\mu_{[0,t]} \in \mathcal P(\mathcal C([0,t]; \R^d))}
\bigl( - H(\mu_{[0,t]} | m_{[0,t]})
+ \lambda \mu_t[f] \bigr),
\]
where $\mathcal P\bigl(\mathcal C([0,t]; \R^d)\big)$ denotes
the set of probability measures on the space
of continuous paths on $[0,t]$ that take values in $\R^d$,
and $\mu_t$ is the marginal of path-space measure $\mu_{[0,t]}$
at the terminal time $t$.
Arguing as in the first step of the proof of
Proposition~3.1 of \cite{JacksonZitridisConcentration}%
\footnote{The only difference here is that we work on the whole space
instead of the torus.
For this reason, we have taken a test function $f$
that is bounded with bounded derivatives.
Note that this function corresponds to the terminal condition
in the stochastic control formulation.
In particular, by the Hamilton--Jacobi--Bellman equation,
the optimal ``control variable'' $\alpha$ has the Lipschitz regularity
$\lVert\nabla \alpha(s,\cdot)\rVert_{L^\infty}
= O((t-s)^{-1/2})$.}%
, we find that the optimization problem on the right is equivalent to
\[
\sup_{(\alpha, \mu)}
\biggl(- \frac 14 \int_0^t\!\!\int_{\R^d}
\lvert \alpha(t,y) \rvert^2 \mu_t(\dd y)\dd t + \lambda \mu_t[f] \biggr),
\]
where the supremum is taken over
all $\alpha$ satisfying the condition of
Proposition~\ref{prop:entropy-decay}
and all continuous flow of probability measures $\mu = (\mu_s)_{s \in [0,t]}$
solving weakly the Fokker--Planck equation:
\[
\partial_t \mu = \Delta \mu + \nabla \cdot
\bigl( (\nabla U - \alpha) \mu \bigr).
\]
Thus, by applying Proposition~\ref{prop:entropy-decay},
\begin{align*}
\log m_t[e^{\lambda f}]
&= \sup_{(\alpha,\mu)}
\biggl( -\frac 14\int_0^t\!\!\int_{\R^d}
\lvert \alpha(s,y)\rvert^2 \mu_s(\dd y) \dd s
+ \lambda \mu_t[f] \biggr) \\
&\leqslant \sup_{(\alpha,\mu)} \biggl(
\frac{M^2+3}{6} e^{-\rho(t-1)} W_2^2(\delta_x, m_*)
- \frac 12 H(\mu_t | m_*) + \lambda \mu_t[f] \biggr).
\end{align*}
The log-Sobolev inequality for $m_*$ implies
the T\textsubscript{$\`1$} inequality:
\[
2H(\mu_t | m_*) \geqslant \rho W_1^2(\mu_t, m_*).
\]
So for all $\mu_t \in \mathcal P(\R^d)$,
\begin{align*}
- \frac 12 H(\mu_t | m_*) + \lambda \mu_t[f]
&\leqslant - \frac{\rho}{4} W_1^2(\mu_t, m_*) + \lambda \mu_t[f] \\
&\leqslant - \frac{\rho}{4} W_1^2(\mu_t, m_*) + \lambda W_1(\mu_t, m_*)
+ \lambda m_*[f] \\
&\leqslant \frac{\lambda^2}{\rho} + \lambda m_*[f],
\end{align*}
the second inequality being the Kantorovich duality.
Thus,
\[
\log m_t[e^{\lambda f}]
\leqslant \frac{M^2+3}{6} e^{-\rho(t-1)} W_2^2(\delta_x, m_*)
+ \frac{\lambda^2}{\rho} + m_*[f].
\]
The inequality holds also for all $1$-Lipschitz
(not necessarily $\mathcal C^1_\textnormal{b}$) function $f$
by approximation.
By Markov's inequality,
\begin{align*}
\Proba \bigl[ f(X_t) - m_*[f] \geqslant r \bigr]
&\leqslant e^{-\lambda r} m_t \bigl[ \exp \bigl( \lambda (
f - m_*[f] ) \bigr) \bigr] \\
&\leqslant \exp \biggl(
\frac{M^2+3}{6} e^{-\rho(t-1)} W_2^2(\delta_x, m_*)
+ \frac{\lambda^2}{\rho} - \lambda r \biggr).
\end{align*}
Taking $\lambda = r\rho/2$, we get the desired result
for the case $m_* = \delta_x$.
The general case follows by conditioning on the initial value. \qed

\section{Additional comments}

\subsubsection*{Simpler assumption}

Under a stronger regularity assumption on the energy functional $F$,
the uniform Poincaré condition on $m^{N,i|-i}_*$ (the fourth point)
follows from the uniform log-Sobolev condition on $\hat m$ (the third point).
Indeed, the log-density of $m^{N,i|-i}_*$ satisfies
\[
- \nabla_i \log m^{N,i|-i}_* (x^i | \vect x^{-i})
= D_m F (\mu_{\vect x}, x^i),
\]
while for $\hat \mu_{\vect x^{-i}}$ we have
\[
- \nabla_i \log \hat \mu_{\vect x^{-i}} (x^i)
= D_m F (\mu_{\vect x^{-i}}, x^i).
\]
So if the energy functional verifies
\[
\lvert D_m F (\mu_{\vect x}, x^i)
- D_m F(\mu_{\vect x^{-i}}, x^i) \rvert \leqslant \frac MN
\]
for some constant $M$ (this is true if
$(m,x,x')\mapsto\nabla_x\frac{\delta^2\`F}{\delta m^2}(m,x,x')$
is bounded for example),
then the derivatives can at most differ by $M/N$.
That is to say, for $N$ large,
the conditional measure $m^{N,i|-i}_*(\cdot|\vect x^{-i})$
is a weak log-Lipschitz perturbation
of the hat measure $\hat \mu_{\vect x^{-i}}$.
Since the latter $\hat \mu_{\vect x^{-i}}$ satisfies a $\rho$-log-Sobolev,
\emph{a fortiori}, it satisfies a $\rho$-Poincaré inequality.
We can then apply the perturbation result of Aida and Shigekawa
\cite[Theorem 2.7]{AidaShigekawaLSI} to obtain
the desired uniform Poincaré inequality for $m^{N,i|-i}_*$
(see also the work of Cattiaux and Guillin \cite{CattiauxGuillinFunctional}
for explicit constants).

\subsubsection*{Lipschitz transport method}

Recently another work of Kook, M.\,S.~Zhang, Chewi, Erdogdu and M.~Li
\cite{KZCELSampling}
addresses the uniform log-Sobolev problem under flat convexity
by the Lipschitz transport method
(see in particular Appendix~B of the fourth arXiv version of the article).
In addition to the assumption of this note,
they suppose that the energy functional $F$ decomposes as follows:
\[
F(m) = \int_{\R^d} V(x) m(\dd x) + F_0(m)
\]
for some $V\colon \R^d \to \R$ that is the sum of a strongly convex
and a Lipschitz function,
and some $F_0 \colon \mathcal P_2(\R^d) \to \R$
whose intrinsic derivative $D_m F_0$ is uniformly bounded.
These additional structures are crucially used
to construct a reverse heat flow that sends a standard Gaussian
in $\R^{Nd}$ to the Gibbs measure $m^N_*$.
They estimate the Lipschitz constant of the mapping induced by the reverse flow
following a recent observation of Brigati and Pedrotti
\cite{BrigatiPedrottiHeatFlow}:
since the log-Hessian of the measure along the heat flow is related
to the covariance of exponentially tiltings of $m^N_*$,
it suffices to control the covariance matrices
in order to prove the existence of a Lipschitz transport mapping.
The covariance control is achieved
by generalizing the propagation of chaos result
of our previous joint work \cite{ulpoc}
to the case where the particles do not live in the same environment
but still interact according to the functional $F_0$.
See \cite[Lemma 24]{KZCELSampling}.
The drawback of the transport approach, as remarked in the article,
is that the log-Sobolev constant obtained depends
on the mean field interaction size doubly exponentially.
It seems also to the author that when $N \to \infty$,
the log-Sobolev constant obtained for $m^N_*$ does not tend to
the constant supposed for mean field invariant measure,
thus not reaffirming the conjecture of \cite{DGPSPhase}.
The constant also becomes weaker exponentially when the dimension increases.

\subsubsection*{Other consequences of Theorem~\ref{thm:lsi}}

As mentioned in the introduction, log-Sobolev inequalities
are a key tool to obtain exponential convergence in entropy.
The entropy is the natural quantity
to consider when treating large particle systems,
as it scales linearly when the number of particles increase.
More precisely, typical initial conditions $m^N_0$
of the particle system \eqref{eq:ps} satisfy
\[
H(m^N_0 | m^N_*) = O(N).
\]
Thus applying the entropy contraction yields
\[
H(m^N_t | m^N_*) \leqslant O\bigl(N \exp (- 2\rho^N t)\bigr).
\]
However, since the log-Sobolev constant $\rho^N$ obtained
in Theorem~\ref{thm:lsi}
is always weaker than the mean field constant $\rho$,
this is not the optimal result that we can obtain.
Indeed, for $t$ such that $H(m^N_t | m^N_*)$ is much larger than $1$,
we can apply the defective contraction in \cite{ulpoc}
(which follows directly from the defective log-Sobolev) to get
\[
H(m^N_t| m^N_*) \leqslant O\Bigl(N \exp\bigl( -2(1-\varepsilon)\rho t\bigr)
+ 1\Bigr),
\]
and then for $t$ large,
we chain it with the perfect (as opposed to defective) contraction.
The $N$-uniform log-Sobolev inequality implies also
a time-uniform propagation of chaos property
by the approach of \cite[Theorem 3.7]{DGPSPhase},
but the result will be weaker than that of \cite{ulpoc}
due to the weaker rate $\rho^N < \rho$.

For the kinetic case, as the tensor product
\[
m^N_*(\dd \vect x) \otimes \frac{\exp(- \lvert \vect v\rvert^2\!/2)\dd\vect v}
{(2\pi)^{Nd/2}}
\]
satisfies a $\min(\rho^N,1)$-log-Sobolev inequality,
we can apply the linear entropic hypo\-coercivity results of Villani
\cite[Part I]{VillaniHypocoercivity}.
Note that the hypocoercive construction is dimension-free, as demonstrated
in \cite[Lemma 4.9]{uklpoc}, so is suitable for studying large particle systems.
Similarly, we can use the triangle argument to show
time-uniform propagation of chaos, as done in \cite{GuillinMonmarcheKinetic},
but the result will be again weaker than that of \cite{uklpoc}
as $\rho^N < \rho$.

\subsubsection*{Optimal constants for log-Sobolev}

The result of Theorem~\ref{thm:lsi} provides only a lower bound
(or an upper bound for different conventions)
on the best log-Sobolev constant for the Gibbs measure $m^N_*$.
Therefore, although our result does not reaffirm
the conjecture posed in \cite{DGPSPhase}, it does not provide
a counter-example to it either.
Here we only mention that the size of the defect $\delta$
in the inequality
\[
I(m^N | m^N_*) \geqslant
2\rho' \bigl( \mathcal F^N(m^N) - N\mathcal F(m_*) \bigr) - \delta
\]
is $O(1)$ when $N \to \infty$
and is optimal according to the Gaussian example in \cite{ulpoc}.
Thus one cannot find a sequence of log-Sobolev constants
$\rho^N$ with $\lim_{N\to\infty}\rho^N = \rho$
by the tightening approach.

\subsubsection*{Long-time concentration of measure}

The long-time concentration of measure property
of the particle system \eqref{eq:ps} is essential
to justifying and quantifying the validity of Monte Carlo algorithms.
Early efforts of Malrieu \cite{MalrieuLSI}
and of Bolley, Guillin and Villani \cite{BGVConcentration}
focus on the two-body interaction case with convex potentials.
A recent work of W.~Liu, L.~Wu and C.~Zhang \cite{LWZLongTime}
addresses the case of possibly non-convex confinement potentials
with small interactions.
Their approach consists of coupling particle systems
with reflected Brownian motions constructed component-by-component.
The coupling by reflection leads to
a Wasserstein contraction, or in other words, a Lipschitz spectral gap,
crucially used to demonstrate the desired long-time concentration
(see proof of Proposition~5.3 therein).
As remarked by the authors in the article,
the Lipschitz spectral gap obtained via coupling
implies an $L^2$ spectral gap, or equivalently, a Poincaré inequality,
which is one of the objectives of their previous joint work with Guillin
\cite{GLWZUPLSI}.
In this note,
we exploit the contraction in entropy related to the gradient flow structure,
instead of the Wasserstein contraction above,
and obtain a lower bound on the large deviation rate functional
for the diffusion process.
The lower bound then imply a non-asymptotic upper bound
on the deviation probability
according to the observation of Jackson and Zitridis
\cite[Proposition~3.1]{JacksonZitridisConcentration}.
This approach towards concentration inequalities is not entirely new:
we can interpret the result of Bobkov and Götze
\cite[Theorem~1.3]{BobkovGoetzeExponential}
as such an implication for the i.\,i.\,d.\ case,
where the relative entropy is the large deviation functional by Sanov's theorem.

\printbibliography

\end{document}